\documentclass[12pt]{amsart}

\usepackage{amsmath,amssymb,mathrsfs}

\usepackage{amsthm} 
\usepackage{verbatim}
\usepackage{srcltx} 
\usepackage{times}
\usepackage{amsmath,amssymb,mathrsfs}

\usepackage{amsthm} 
\usepackage{srcltx}

 \usepackage[dvips]{graphicx,color}

\usepackage{amsopn,cite,mathrsfs}
\usepackage{amsfonts}
\usepackage{eucal}

\numberwithin{equation}{section}

\newtheorem{thm}{Theorem}[section]
\newtheorem{lemma}[thm]{Lemma}

\newtheorem*{proof-mthm}{Proof of Main Theorem}

\newcommand{\C}{{\mathbb C}}

 \newcommand{\blem}{\begin{lemma}}
 \newcommand{\elem}{\end{lemma}}
\newcommand{\bpve}{\begin{proof}}
\newcommand{\epve}{\end{proof}}

\def\C{\mathbb{C}}

\newcommand{\BCn}{\mathbb B_n}
\newcommand{\Bn}{\mathbb B^n}
\newcommand{\Rn}{\mathbb R^n}
\newcommand{\Dh}{\Delta_h}

\usepackage{amssymb}
\usepackage{amsmath}
\usepackage{amsfonts}
\usepackage[colorlinks=true,linkcolor=red]{hyperref}
\newtheorem{theorem}{Theorem}
\theoremstyle{plain}

\numberwithin{equation}{section}

\numberwithin{theorem}{section}

\numberwithin{proposition}{section}

\title{A Converse Mean-Value Property}
\author[M.~Moravík]{Matěj Moravík}
\address{Mathematics Institute, Silesian University in Opava,
 Na~Rybn\'\i\v cku~ 626/1, 74601~Opava, Czech Republic}
 \email{Mor0109@slu.cz}
 
\author[E.H.~Youssfi]{El Hassan Youssfi}
\address{Aix-Marseille Universit\'e,
Institut de Math\'ematiques de Marseille (I2M),  UMR 7373,
Site de Saint Charles, 3~place Victor Hugo, Case~19,
13331~Marseille C\'edex~3, France}
\email{el-hassan.youssfi{@}univ-amu.fr}
\thanks{Research of M.~Moravik supported by  GA CR grant no. 25-18042S.
}
\subjclass{Primary 31C05; Secondary 33C55, 32A36}

\begin{document}

\maketitle

\begin{abstract}

We prove a converse mean-value theorem for harmonic functions associated with the invariant Laplace--Beltrami operator on the real unit ball. Under suitable integrability and topological assumptions on the domain and its Green potential, we show that the invariant volume mean-value property characterizes hyperbolic balls centered at the origin. As a consequence, we also answer a question left open by Bruna and D\'etraz in the setting of invariantly harmonic functions.
\end{abstract}

\section{Introduction and statement of the main result}

The mean-value property plays a central role in potential theory and harmonic analysis.
For classical harmonic functions in Euclidean space, converse mean-value theorems
characterize balls among large classes of domains see \cite{Ep}, \cite{Ku} and \cite{AG}. In the complex unit ball,
Bruna and D\'etraz \cite{BD} proved that Bergman balls are the only relatively compact
connected domains enjoying the invariant volume mean-value property for invariantly
harmonic functions. Their proof is based on Green potentials, tangential vector
fields and a radial monotonicity argument. 

The purpose of the present paper is to establish a real-hyperbolic converse mean-value theorem for
the invariant Laplace--Beltrami operator which characterizes balls among large classes of domains satisfying some integrability conditions, which include, in particular, the relatively compact case. 
Our method can be adapted to solve an open question stated in \cite{BD}.

The aim of this note is to formulate and prove a converse mean-value theorem
for the real hyperbolic invariant Laplacian on the the unit ball  $
\Bn=\{x\in\Rn:\ |x|<1\}$ in ${\mathbb R}^n, n\geq 2$.  More precisely, for $f \in C^{2}(\Bn)$, the Laplace-Beltrami operator associated with the hyperbolic metric is given by (up to a constant factor)

$$
\Dh f(x)=\left(1-|x|^{2}\right)\left[\left(1-|x|^{2}\right) \Delta f(x)+2(n-2)\langle x, \nabla f(x)\rangle\right],
$$
where $\Delta f=\frac{\partial^{2} f}{\partial x_{1}^{2}}+\cdots+\frac{\partial^{2} f}{\partial x_{n}^{2}}$ and $\nabla f=\left(\frac{\partial f}{\partial x_{1}}, \ldots, \frac{\partial f}{\partial x_{n}}\right)$ are the Euclidean Laplacian and gradient. It  satisfies  $\Delta_{h} f(a)=\Delta\left(f \circ \varphi_{a}\right)(0)$, where

$$
\varphi_{a}(x)=\frac{a|x-a|^{2}+\left(1-|a|^{2}\right)(a-x)}{[x, a]^{2}},
$$
is the Moebius transformation mapping $\Bn$ to $\Bn$ and exchanging $a$ and 0 . Here $[x, a]$ is defined as

$$
[x, a]:=\sqrt{1-2\langle x, a\rangle+|x|^{2}|a|^{2}} \quad(x, a \in \Bn) .
$$

Let $\Omega$ be an open set in $\Bn$.  A complex-valued function $f \in C^{2}(\Omega)$ is called hyperbolic harmonic or $H$-harmonic  on $\Omega$, if $\Delta_{h} f(x)=0$ for all $x \in \Omega$.

Throughout, let 
$
\Bn=\{x\in\Rn:\ |x|<1\}$ be the unit ball in ${\mathbb R}^n$ 
and 
\[d\nu(x)=\frac{dx}{(1-|x|^2)^n}.
\]
denote the measure is invariant under the Moebius group.

 Our main result is the follwing:
\begin{theorem}\label{main}
Suppose that \(n\ge2\) and let \(\Omega\subset \Bn\) be an open connected set  containing the origin $0$ such that
\begin{eqnarray}\label{finiteVol}
\nu(\Omega)<+\infty.
\end{eqnarray}
Assume that for every compact set \(K\Subset\mathbb B^n\),
\begin{eqnarray}\label{C1cond}
\sup_{z\in K}
\int_\Omega |\xi-z|^{1-n}\,d\nu(\xi)
<+\infty,
\end{eqnarray}
\begin{eqnarray}\label{C1bcond}
\lim_{r\to1^-} (1-r^2)^{n-2}
\sup_{z\in\Omega\cap\{|z|=r\}}
\int_\Omega |\xi-z|^{1-n}\,d\nu(\xi)=0
\end{eqnarray}
and
\begin{eqnarray}\label{Topcond}
\partial\Omega\cap\mathbb B^n =\partial\overline{\Omega}\cap\mathbb B^n.
\end{eqnarray}
%%%%%%%%
%%%%%%
%%%%%%%%%%%%
%%%%%%%%%
Suppose that, for every $\nu$-integrable  \(H\)-harmonic function \(f\) in a neighbourhood of
\(\overline{\Omega} \cap\mathbb B^n\), one has
\[
\frac1{\nu(\Omega)}\int_\Omega f(x)\,d\nu(x)=f(0).
\]
Then \(\Omega\) is a hyperbolic ball centered at \(0\). Equivalently, there
exists \(0<R<1\) such that
\[
\Omega=\{x\in\Bn:\ |x|<R\}.
\]
\end{theorem}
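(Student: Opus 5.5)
Let $B_R=\{|x|<R\}$ be the Euclidean ball centred at $0$ with $\nu(B_R)=\nu(\Omega)$; since $\nu$ is Möbius invariant, $B_R$ is also a hyperbolic ball centred at $0$. By the classical mean-value property of $H$-harmonic functions on spheres centred at $0$, integrated in polar coordinates, $B_R$ also satisfies the volume mean-value property. The plan is to compare $\Omega$ with $B_R$ through Green potentials, following Bruna--Détraz. Let $G(x,\xi)$ be the Green function of $\Delta_h$ on $\Bn$, which is $\mathrm{Möbius}$-invariant and radial in the hyperbolic distance, with $G(x,\xi)\sim c|x-\xi|^{2-n}$ (or logarithmic when $n=2$) near the diagonal. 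Set
\[
u(z)=\int_\Omega G(z,\xi)\,d\nu(\xi)-\int_{B_R}G(z,\xi)\,d\nu(\xi),
\]
so that, formally, $\Delta_h u=-(\chi_\Omega-\chi_{B_R})$ up to a normalising constant.

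\textbf{Step 1: $u$ vanishes off $\Omega\cup B_R$.} For $z\notin\overline\Omega\cup\overline{B_R}$, the function $\xi\mapsto G(z,\xi)$ is $H$-harmonic near $\overline\Omega\cap\Bn$. To apply the hypothesis I need it to be $\nu$-integrable over $\Omega$, and I get this from \eqref{finiteVol} together with the decay $G(z,\xi)\lesssim(1-|\xi|^2)^{n-1}$. Both $\Omega$ and $B_R$ have the mean-value property, so $u(z)=\nu(\Omega)G(z,0)-\nu(B_R)G(z,0)=0$. By continuity $u=0$ on the closure of the complement. By \eqref{Topcond}, every point of $\partial\Omega\cap\Bn$ is a limit of points outside $\overline\Omega$, so $u$ vanishes there as well.

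\textbf{Step 2: regularity of $u$.} Differentiating under the integral gives $\nabla u(z)=\int_\Omega\nabla_zG\,d\nu-(\dots)$ with $|\nabla_zG|\lesssim|\xi-z|^{1-n}$ locally. Condition \eqref{C1cond} therefore makes $u\in C^1(\Bn)$, and $u$ and $\nabla u$ vanish on $\Bn\setminus(\Omega\cup B_R)$ and on the boundary pieces. Condition \eqref{C1bcond} controls $(1-|z|^2)^{n-2}|\nabla u|$ near the sphere, and this is what will let the boundary term at $|z|=1$ vanish in Step 3. I would verify it by writing the relevant boundary term in hyperbolic polar form.

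\textbf{Step 3: radial monotonicity.} This is the main obstacle. Consider the radial vector field $X=\langle z,\nabla\rangle$, which is the generator of dilations, and the function $v=Xu$. Alternatively, integrate $\Delta_h u$ against a radial test function $\psi(|z|)$ that is increasing, for instance $\psi=\chi_{\{|z|>R\}}$ smoothed, or $\psi(|z|)=G(z,0)$ reversed. Green's identity, justified by Steps 1--2 and by \eqref{C1bcond} for the boundary term, gives
\[
\int(\chi_\Omega-\chi_{B_R})\,\psi\,d\nu=-\int u\,\Delta_h\psi\,d\nu .
\]
Choose $\psi$ so that $\Delta_h\psi$ is supported where $u=0$. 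Concretely, take $\psi=\min(G(z,0),G(R,0))$-type functions, i.e.\ the hyperbolic Green potential of the uniform measure on the sphere $|z|=R$. Then $\Delta_h\psi$ is a measure on $\{|z|=R\}$. Since $u$ is a difference of potentials of equal-mass measures and the mean-value identity holds for $G(\cdot,\xi)$, we get
\[
\int(\chi_\Omega-\chi_{B_R})\,\psi=\int_{\Omega\setminus B_R}\psi-\int_{B_R\setminus\Omega}\psi=0 .
\]
Here $\psi$ is constant $=c_R$ on $B_R$ and strictly smaller outside. Combined with $\nu(\Omega\setminus B_R)=\nu(B_R\setminus\Omega)$, this yields $\int_{\Omega\setminus B_R}(c_R-\psi)\,d\nu=0$, hence $\nu(\Omega\setminus B_R)=0$. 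The justification for using $\psi$ is that it equals $\nu(B_R)^{-1}$ times the potential of $B_R$'s mean-value measure, which is $H$-superharmonic and harmonic off the sphere. It is approximated by $H$-harmonic functions near $\overline\Omega$ via integrability, and this approximation is where \eqref{C1bcond} enters.

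\textbf{Step 4: conclusion.} From $\nu(\Omega\setminus B_R)=0=\nu(B_R\setminus\Omega)$ and openness, $\Omega\subset\overline{B_R}$ and $B_R\setminus\Omega$ is null. Condition \eqref{Topcond} excludes slits, giving $B_R\subset\Omega$. Openness then gives $\Omega\subset B_R$, so $\Omega=B_R$.
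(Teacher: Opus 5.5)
Steps 1, 2 and 4 are essentially fine, but Step 3 has a genuine gap, and it is the step that carries the whole argument. Your test function $\psi$, the Green potential of normalised surface measure $\sigma_R$ on $S_R=\{|z|=R\}$, is $H$-harmonic only off $S_R$. Since $\Omega$ is connected and contains $0$, $S_R\cap\Omega$ is nonempty unless $\Omega\subset B_R$, and that is essentially what you are trying to prove. So the mean-value hypothesis cannot be applied to $\psi$.

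Your Green-identity formulation hides the same problem. By Fubini, $\int(\chi_\Omega-\chi_{B_R})\psi\,d\nu$ equals (up to normalisation) $\int_{S_R}u\,d\sigma_R$. Step 1 only gives $u=0$ on the closure of $\Bn\setminus(\overline\Omega\cup\overline{B_R})$, and that closure does not meet the relatively open set $S_R\cap\Omega$. On $S_R\cap\Omega$ you know nothing about $u$. Equivalently, the identity $\int_\Omega G(\xi,\eta)\,d\nu(\xi)=\nu(\Omega)G(0,\eta)$ you invoke holds only for poles $\eta\notin\overline\Omega$, not for $\eta\in S_R\cap\Omega$. For the same reason, the last sentence of Step 1 is too strong: points of $\partial\Omega\cap B_R$ are approached from $B_R\setminus\overline\Omega$, where $u$ need not vanish.

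The proposed repair, approximating $\psi$ by $H$-harmonic functions near $\overline\Omega$, cannot work in any sense that lets the identity pass to the limit. An $L^1(\Omega,\nu)$-limit of $H$-harmonic functions is $H$-harmonic in $\Omega$, because local invariant mean values upgrade the convergence to locally uniform convergence. But $\Delta_h\psi$ is a nonzero measure on $S_R\cap\Omega$. Condition \eqref{C1bcond} only controls behaviour as $|z|\to1$, so it is irrelevant to this interior obstruction. Given your Step 4, the identity $\int_\Omega\psi\,d\nu=\nu(\Omega)\psi(0)$ that you need is equivalent to the theorem itself.

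The paper's argument avoids this obstruction by using no comparison ball. It sets $u=h-g$, where $h=\int_\Omega G(\xi,\cdot)\,d\nu(\xi)$ and $g$ is the radial solution of $\Delta_h g=1$ from Lemma~\ref{g}. Then $u$ is $H$-harmonic in $\Omega$ and radial on $\Bn\setminus\overline\Omega$. The rotation fields $T_{ij}$ commute with $\Delta_h$, so each $T_{ij}u$ is $H$-harmonic in $\Omega$. It vanishes on $\partial\Omega\cap\Bn$ via \eqref{Topcond}. By Lemma~\ref{Breg}, which is where \eqref{C1bcond} genuinely enters, it tends to $0$ on $\Omega\cap\{|z|=r\}$ as $r\to1$. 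The maximum principle on $\Omega\cap\{|z|<r\}$ then forces $T_{ij}u\equiv0$. Hence $u$ is radial, therefore constant, and $\partial\Omega\cap\Bn$ lies in a level set of the radial function $\nu(\Omega)G(0,\cdot)-g$.

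If you want to keep a comparison-ball strategy, the singular set of your test function must stay outside $\Omega$. In the Euclidean case Kuran arranges this by taking the largest ball centred at $0$ inside $\Omega$, rather than the ball of equal volume, and using a test function singular only at a point of $\partial\Omega$.
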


\section{Preparatory results}

Let \(G(\xi,z)\) be the Green function associated with \(\Dh\). We use the
sign convention
\[
\Delta_{h,z} G(\xi,z)=\delta_\xi(z)
\]
in the sense of distributions. With this convention, the Green potential
\[
([G]\Psi)(z):=\int_{\Bn}\Psi(\xi)G(\xi,z)\,d\nu(\xi)
\]
satisfies
\[
\Dh ([ G]\Psi)=\Psi,
\]
for all $C^\infty$-compactly supported functions $\Psi$ in $\Bn.$
The Green function is symmetric:
\[
G(\xi,z)=G(z,\xi),
\]
and, for fixed \(z \in \mathbb B^n\), the function
\[
\xi\longmapsto G(\xi,z)
\]
is \(H\)-harmonic away from \(\xi=z\).
In particular, if \(z\notin\overline{\Omega}\), then
\[
\xi\longmapsto G(\xi,z)
\]
is a $\nu$-integrable \(H\)-harmonic in a neighbourhood of \(\overline{\Omega}\).
Put
\[
c=\nu(\Omega).
\]
The mean-value hypothesis says
\[
\int_\Omega f(\xi)\,d\nu(\xi)=c f(0)
\]
for every $\nu$-integrable \(H\)-harmonic function \(f\) in a neighborhoord of  \(\overline{\Omega}\cap\mathbb B^n\).
Taking
\[
f(\xi)=G(\xi,z),
\qquad z\notin\overline{\Omega},
\]
we obtain
\[
\int_\Omega G(\xi,z)\,d\nu(\xi)=cG(0,z),
\qquad z\notin\overline{\Omega}.
\]
Define
\begin{eqnarray}\label{h}
h(z):=\int_\Omega G(\xi,z)\,d\nu(\xi).
\end{eqnarray}
Then
\begin{eqnarray}\label{hG}
h(z)=cG(0,z),
\qquad z\notin\overline{\Omega}.
\end{eqnarray}

%%%%%%%%%%%%%
For 
 \(1\le i<j\le n\),
let
\[
T_{ij}
=
x_i\frac{\partial}{\partial x_j}
-
x_j\frac{\partial}{\partial x_i}.
\]
%%%%%%%
\begin{lemma}\label{gradient}
 Then there exists a constant \(C>0\) such that
\[
|\nabla_zG(\xi,z)|
\le
C(1-|z|^2)^{n-2}|\xi-z|^{1-n},
\qquad
\xi,z\in\Bn,\quad \xi\ne z.
\]
Consequently,
\[
|T_{ij,z}G(\xi,z)|
\le
C(1-|z|^2)^{n-2}|\xi-z|^{1-n},
\qquad
\xi,z\in\Bn,\quad \xi\ne z.
\]
\end{lemma}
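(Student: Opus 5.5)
The idea is to use the explicit radial structure of the hyperbolic Green function and reduce everything to the two standard Moebius identities
\[
|\varphi_z(\xi)|=\frac{|\xi-z|}{[\xi,z]},\qquad 1-|\varphi_z(\xi)|^2=\frac{(1-|\xi|^2)(1-|z|^2)}{[\xi,z]^2}.
\]
Since $\Dh$ is Moebius invariant and $G(\cdot,0)$ is radial, I will write $G(\xi,z)=g(\rho)$ with $\rho=\rho(\xi,z)=|\varphi_z(\xi)|$. Here $g$ is the radial fundamental solution. From the formula for $\Dh$ on radial functions, $g'(r)=\kappa_n(1-r^2)^{n-2}r^{1-n}$ for a dimensional constant $\kappa_n>0$ fixed by the normalization $\Dh G=\delta$; explicitly, $g(r)=-\kappa_n\int_r^1(1-t^2)^{n-2}t^{1-n}\,dt$. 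The chain rule then gives
\[
\nabla_z G(\xi,z)=g'(\rho)\,\nabla_z\rho .
\]

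The key step is the bound $|\nabla_z\rho|\le (1-\rho^2)/(1-|z|^2)$, with equality in fact. I would prove it geometrically. We have $\rho=\tanh(d(\xi,z)/2)$, where $d$ is the hyperbolic distance of the metric $4|dx|^2/(1-|x|^2)^2$, up to normalization. The hyperbolic gradient of $d(\xi,\cdot)$ has unit length, so its Euclidean gradient has length $2/(1-|z|^2)$. Since $d\rho/dd=(1-\rho^2)/2$, the bound follows. As a check, one can instead differentiate $\rho^2=|\xi-z|^2/[\xi,z]^2$ directly, first at $z=0$ where it is elementary, and then transport to general $z$ by Moebius invariance. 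I expect this step to be the only real obstacle: keeping the normalizations consistent is where a slip could occur, but only the constant $C$ depends on them.

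Combining the two pieces,
\[
|\nabla_zG(\xi,z)|\le \kappa_n\frac{(1-\rho^2)^{n-1}\rho^{1-n}}{1-|z|^2}
=\kappa_n(1-|z|^2)^{n-2}|\xi-z|^{1-n}\Bigl(\frac{1-|\xi|^2}{[\xi,z]}\Bigr)^{n-1}.
\]
This identity comes from substituting the two Moebius identities: the powers of $[\xi,z]$ combine to $[\xi,z]^{-(n-1)}$. Finally, $[\xi,z]\ge 1-|\xi||z|\ge 1-|\xi|\ge \tfrac12(1-|\xi|^2)$, so the last factor is at most $2^{n-1}$. This gives the first estimate with $C=2^{n-1}\kappa_n$.

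For the tangential fields we have $|T_{ij,z}G|\le |z_i||\partial_{z_j}G|+|z_j||\partial_{z_i}G|\le 2|z|\,|\nabla_zG|\le 2|\nabla_zG|$, since $|z|<1$. The second estimate therefore follows after enlarging $C$.
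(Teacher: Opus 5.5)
Your proof is correct and follows the paper's route. Both write $G$ as a radial profile of the pseudo-hyperbolic distance, apply the chain rule, and rewrite via the identities for $|\varphi_z(\xi)|$ and $1-|\varphi_z(\xi)|^2$. Both then absorb the leftover factor using $[\xi,z]\ge 1-|\xi|\ge\frac12(1-|\xi|^2)$, and the tangential bound is obtained exactly as in the paper. The only difference is that you get $|\nabla_z\rho|$ exactly, as an equality, from the unit-speed property of the hyperbolic distance, working with $\rho=|\varphi_z(\xi)|$ rather than its square. The paper instead differentiates $|\xi-z|^2/[\xi,z]^2$ explicitly and settles for the cruder bound $C|\xi-z|/[\xi,z]^2$.
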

\begin{proof} It is known \cite{St} that the Green function associated with the invariant Laplacian may be written in
the form
$$G(\xi, z) = \mathcal G(\rho(\xi,z))$$
where 
 \[
\mathcal G(r)
=
-C_n \int_{s}^1\frac{(1-r)^{\,n-2}}{r^{n/2}} dr
\]
for a positive constant $C_n$, and
\[
\rho(\xi,z)
=
|\varphi_z(\xi)|^2
=
\frac{|\xi-z|^2}
     {1-2\langle \xi,z\rangle+|\xi|^2|z|^2}.
\]
Let
\[
B(\xi,z)
:=
1-2\langle \xi,z\rangle+|\xi|^2|z|^2.
\]
%%%%%%%%%%%%%%%%
We will use the identities
\[
B(\xi,z)=|\xi-z|^2+(1-|\xi|^2)(1-|z|^2)
\]
and
\[
1-\rho(\xi,z)
=
\frac{(1-|\xi|^2)(1-|z|^2)}{B(\xi,z)}.
\]
Since
\[
|\mathcal G'(s)|
\le
C s^{-n/2}(1-s)^{n-2},
\qquad 0<s<1.
\]
Therefore
\[
|\mathcal G'(\rho(\xi,z))|
\le
C
|\xi-z|^{-n}
B(\xi,z)^{2-\frac n2}
(1-|\xi|^2)^{n-2}
(1-|z|^2)^{n-2}.
\]
We now estimate \(\nabla_z\rho\). Differentiating the formula for \(\rho\)
gives
\[
\nabla_z\rho(\xi,z)
=
\frac{2}{B(\xi,z)^2}
\left[
B(\xi,z)(z-\xi)
+
|\xi-z|^2(\xi-|\xi|^2z)
\right].
\]
Moreover,
\[
|\xi-|\xi|^2z|^2
=
|\xi|^2B(\xi,z)
\le
B(\xi,z).
\]
Since \(B(\xi,z)\ge |\xi-z|^2\), it follows that
\[
|\nabla_z\rho(\xi,z)|
\le
C\frac{|\xi-z|}{B(\xi,z)}.
\]
By the chain rule,
\[
\nabla_zG(\xi,z)
=
\mathcal G'(\rho(\xi,z))\,\nabla_z\rho(\xi,z).
\]
Hence
\[
|\nabla_zG(\xi,z)|
\le
C
(1-|z|^2)^{n-2}
|\xi-z|^{1-n}
\frac{(1-|\xi|^2)^{n-2}}{B(\xi,z)^{\frac{n-2}{2}}}.
\]
Finally,
\[
B(\xi,z)\ge (1-|\xi||z|)^2
\ge (1-|\xi|)^2.
\]
Since \(1-|\xi|^2\le 2(1-|\xi|)\), we obtain
\[
\frac{(1-|\xi|^2)^{n-2}}{B(\xi,z)^{\frac{n-2}{2}}}
\le C.
\]
Therefore
\[
|\nabla_zG(\xi,z)|
\le
C(1-|z|^2)^{n-2}|\xi-z|^{1-n}.
\]
It remains to prove the tangential estimate. Since
\[
T_{ij,z}
=
z_i\frac{\partial}{\partial z_j}
-
z_j\frac{\partial}{\partial z_i},
\]
we have
\[
|T_{ij,z}G(\xi,z)|
\le
|z_i|\left|\frac{\partial G}{\partial z_j}(\xi,z)\right|
+
|z_j|\left|\frac{\partial G}{\partial z_i}(\xi,z)\right|.
\]
Since \(|z_i|,|z_j|\le |z|<1\), this gives
\[
|T_{ij,z}G(\xi,z)|
\le
2|\nabla_zG(\xi,z)|.
\]
The desired estimate for \(T_{ij,z}G\) follows immediately.
\end{proof}

\begin{lemma}\label{Regularity} Under (\ref{finiteVol}) and (\ref{C1cond}), the function $h$ given by (\ref{h})
satisfies
\begin{enumerate}
\item \(h\in C^1(\Bn)\);
\item \(\Delta_h h=\chi_\Omega\) in \(\mathcal D'(\Bn)\);
\item \(\Delta_h h=1\) classically in \(\Omega\).
\end{enumerate}
\end{lemma}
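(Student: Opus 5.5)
We prove the three assertions in order, each time working with the representation $G(\xi,z)=\mathcal G(\rho(\xi,z))$ and the gradient bound of Lemma~\ref{gradient}.

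\textbf{Step 1: $h$ is well defined and continuous.} First we check that the defining integral (\ref{h}) converges locally uniformly. Since $|\mathcal G(s)|\le C s^{1-n/2}$ near $s=0$ for $n\ge3$ (and $C|\log s|$ for $n=2$), and $\mathcal G(s)\to0$ as $s\to1$, we have $|G(\xi,z)|\le C|\xi-z|^{2-n}$ (respectively a logarithm) for $z$ in a compact set $K$ and $\xi$ near $K$. Away from $K$, $G(\cdot,z)$ is bounded uniformly in $z\in K$. Both kernels are dominated by $C(1+|\xi-z|^{1-n})$. Hence, by (\ref{finiteVol}) and (\ref{C1cond}), the integral defining $h$ converges, and it does so uniformly for $z\in K$. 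To get continuity we use uniform integrability: the contribution of $\{|\xi-z|<\delta\}$ is $O(\delta)$ uniformly in $z\in K$, because $|\xi-z|^{2-n}\le\delta|\xi-z|^{1-n}$ there and the latter has uniformly bounded $\nu$-integral by (\ref{C1cond}). On the rest of $\Omega$ the integrand is continuous in $z$ and dominated by a $\nu$-integrable function, so dominated convergence applies.

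\textbf{Step 2: $h\in C^1$.} We differentiate under the integral sign. By Lemma~\ref{gradient}, $|\nabla_zG(\xi,z)|\le C|\xi-z|^{1-n}$ for $z\in K$, and this bound is integrable uniformly by (\ref{C1cond}). To justify the differentiation, we set $h_\epsilon(z)=\int_\Omega G_\epsilon(\xi,z)\,d\nu(\xi)$, where $G_\epsilon=\mathcal G_\epsilon(\rho)$ with $\mathcal G_\epsilon$ a smooth truncation of $\mathcal G$ near $s=0$ (for instance, replace $\mathcal G$ by a constant on $s<\epsilon$). Then $h_\epsilon\in C^1$ with $\nabla h_\epsilon=\int_\Omega\nabla_zG_\epsilon\,d\nu$, and $\nabla_zG_\epsilon$ still obeys the bound of Lemma~\ref{gradient} uniformly in $\epsilon$. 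We then show that $h_\epsilon\to h$ and $\nabla h_\epsilon\to\int_\Omega\nabla_zG\,d\nu$ uniformly on $K$. The error is supported in $\{\rho<\epsilon\}$, that is, where $|\xi-z|\lesssim\sqrt\epsilon$. The uniform smallness there follows from the absolute continuity argument of Step~1, namely that $\int_{\{|\xi-z|<\delta\}\cap\Omega}|\xi-z|^{1-n}d\nu\to0$ uniformly on $K$.

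This is the main obstacle. Condition (\ref{C1cond}) gives only boundedness, not equi-integrability. I would obtain equi-integrability by noting that $\nu$ is locally comparable to Lebesgue measure on $K_\eta$, so that
\[
\int_{|\xi-z|<\delta}|\xi-z|^{1-n}\,d\nu(\xi)\le C_K\int_{|\xi-z|<\delta}|\xi-z|^{1-n}\,dx=C\delta .
\]
This estimate in fact makes (\ref{C1cond}) needed only for the far part. A uniform limit of $C^1$ functions whose gradients converge uniformly is $C^1$, which proves (1).

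\textbf{Step 3: the equation.} For $\Psi\in C_c^\infty(\Bn)$, the operator $\Dh$ is formally self-adjoint with respect to $\nu$. Using Fubini, which is justified by the integrability above, and the symmetry $G(\xi,z)=G(z,\xi)$, we get
\[
\int h\,\Dh\Psi\,d\nu=\int_\Omega\Big(\int G(\xi,z)\Dh\Psi(z)\,d\nu(z)\Big)d\nu(\xi)=\int_\Omega \Dh([G]\Psi)(\xi)\,d\nu(\xi)=\int_\Omega\Psi\,d\nu .
\]
Here the middle identity uses the defining property $\Dh([G]\Psi)=\Psi$ together with self-adjointness, which gives $\int G(\xi,\cdot)\Dh\Psi\,d\nu=\Psi(\xi)$. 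This proves (2), with distributions paired against $d\nu$.

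For (3), note that on $\Omega$ the distributional equation reads $\Dh h=1$. The operator $\Dh$ is elliptic with real-analytic coefficients, so the function $u=h-q$, where $q$ is a fixed smooth radial solution of $\Dh q=1$ (for example $q=\int_{|\xi|<r}G(\xi,\cdot)\,d\nu$ near a given point), is a distributional, hence by Weyl's lemma a smooth, $H$-harmonic function. Therefore $h$ is $C^\infty$ in $\Omega$ and $\Dh h=1$ classically.
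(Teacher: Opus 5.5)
Your proof is correct and has the same skeleton as the paper's. Both use the gradient bound of Lemma~\ref{gradient} to differentiate under the integral for (1), Fubini against the fundamental-solution identity for (2), and elliptic regularity for (3).

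Where you differ is the analytic justification in (1), and your version is the more careful one. The paper goes from $\sup_{z\in K}\int_\Omega|\xi-z|^{1-n}\,d\nu(\xi)<\infty$ straight to differentiation under the integral sign, and to continuity of the derivatives ``by the same domination''. But a uniform bound on the integrals is not a $z$-independent integrable majorant. What is actually needed is the equi-integrability you obtain from the local comparability of $\nu$ with Lebesgue measure, $\int_{|\xi-z|<\delta}|\xi-z|^{1-n}\,d\nu(\xi)\le C_K\delta$, combined with your truncation $G_\epsilon$. You also check that $h$ is finite and continuous, which the paper never does. Your near-field estimate, together with the trivial far-field bound $\delta^{1-n}\nu(\Omega)$ for a fixed $\delta$ comparable to $\mathrm{dist}(K,\partial\Bn)$, in fact shows that (\ref{C1cond}) follows from (\ref{finiteVol}).

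Two minor points. In Step 3, self-adjointness cannot be applied to $[G]\Psi$, since it is not compactly supported. The identity $\int G(\xi,z)\Delta_h\Psi(z)\,d\nu(z)=\Psi(\xi)$ should instead be read off from $\Delta_{h,z}G(\xi,\cdot)=\delta_\xi$, as the paper does. Alternatively, note that $[G](\Delta_h\Psi)-\Psi$ is $H$-harmonic and tends to $0$ at $\partial\Bn$, and apply the maximum principle. In (3), your example $q=\int_{|\xi|<r}G(\xi,\cdot)\,d\nu$ presupposes the very smoothness being proved. Use the explicit radial $g$ of Lemma~\ref{g} instead, or simply apply elliptic regularity to $\Delta_h h=1$, whose right-hand side is smooth.
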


\begin{proof} 
Fix a compact set \(K\Subset\Bn\). Lemma \ref{gradient} implies that
\[
|\nabla_zG(\xi,z)|
\le C_K |\xi-z|^{1-n},
\qquad z\in K.
\]
By assumption,
\[
\sup_{z\in K}
\int_\Omega |\xi-z|^{1-n}\,d\nu(\xi)<+\infty.
\]
Hence
\[
\sup_{z\in K}
\int_\Omega |\nabla_zG(\xi,z)|\,d\nu(\xi)<+\infty.
\]
Therefore differentiation under the integral sign is justified and
\[
\partial_{z_j}h(z)
=
\int_\Omega \partial_{z_j}G(\xi,z)\,d\nu(\xi).
\]

The same domination allows the use of dominated convergence, proving that the
first derivatives are continuous. Thus
\[
h\in C^1(\Bn).
\]
Let \(\varphi\in C_c^\infty(\mathbb B^n)\). Using Fubini's theorem,
\[
\langle \Delta_h h,\varphi\rangle
=
\langle h,\Delta_h\varphi\rangle
=
\int_\Omega
\left(
\int_{\Bn}
G(\xi,z)\Delta_h\varphi(z)\,d\nu(z)
\right)
d\nu(\xi).
\]
Since
\[
\Delta_{h,z}G(\xi,z)=\delta_\xi(z),
\]
we obtain
\[
\int_{\Bn}
G(\xi,z)\Delta_h\varphi(z)\,d\nu(z)
=
\varphi(\xi).
\]
Hence
\[
\langle \Delta_h h,\varphi\rangle
=
\int_\Omega \varphi(\xi)\,d\nu(\xi)
=
\langle \chi_\Omega,\varphi\rangle.
\]
Therefore
\[
\Delta_h h=\chi_\Omega
\]
in the sense of distributions.

Inside \(\Omega\), the right-hand side equals the smooth function \(1\).
Elliptic regularity for \(\Delta_h\) implies that \(h\) is \(C^\infty\) in
\(\Omega\), and consequently
\[
\Delta_h h=1
\]
holds pointwise in \(\Omega\).
\end{proof}
\begin{lemma}\label{Breg}
Under (\ref{finiteVol}), (\ref{C1cond}) and (\ref{C1bcond}) , the function $h$ given by (\ref{h})
satisfies
\begin{eqnarray}\label{hbreg}
\lim_{r\to1^-}
\sup_{z\in\Omega\cap\{|z|=r\}}
|T_{ij}h(z)|
=0
\end{eqnarray}
for all 
 \(1\le i<j\le n\).
\end{lemma}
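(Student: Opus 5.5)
The plan is to differentiate $h$ under the integral sign in the tangential direction and then use the pointwise kernel bound of Lemma \ref{gradient}. After that, hypothesis (\ref{C1bcond}) does the rest, so the argument is short.

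First, fix $1\le i<j\le n$ and $z\in\Omega$. By Lemma \ref{Regularity}, $h\in C^1(\Bn)$. Its proof gives the differentiation formula
\[
\partial_{z_k}h(z)=\int_\Omega \partial_{z_k}G(\xi,z)\,d\nu(\xi),\qquad k=1,\dots,n,
\]
valid at every $z\in\Bn$. The singularity at $\xi=z$ is harmless here: by Lemma \ref{gradient} it is integrable, with the domination supplied by (\ref{C1cond}) on a compact neighbourhood of $z$. Multiplying by $z_i$, $z_j$ and subtracting, we obtain
\[
T_{ij}h(z)=\int_\Omega T_{ij,z}G(\xi,z)\,d\nu(\xi).
\]

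Second, we apply the tangential estimate of Lemma \ref{gradient}. Its constant $C$ is uniform in $\xi$ and $z$, and this uniformity is the crucial point. It gives
\[
|T_{ij}h(z)|\le C(1-|z|^2)^{n-2}\int_\Omega|\xi-z|^{1-n}\,d\nu(\xi).
\]
Now restrict to $z\in\Omega\cap\{|z|=r\}$ and take the supremum. This yields
\[
\sup_{z\in\Omega\cap\{|z|=r\}}|T_{ij}h(z)|\le C(1-r^2)^{n-2}\sup_{z\in\Omega\cap\{|z|=r\}}\int_\Omega|\xi-z|^{1-n}\,d\nu(\xi).
\]
By (\ref{C1bcond}), the right-hand side tends to $0$ as $r\to1^-$, which gives (\ref{hbreg}).

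The only delicate step is the justification of differentiation under the integral sign, and Lemma \ref{Regularity} already provides it. The one thing to check is that the identity is used pointwise at each fixed $z$, with domination taken on a small compact ball around $z$. No uniformity as $|z|\to1$ is needed at that stage. All uniformity near the boundary comes from the global constant in Lemma \ref{gradient} together with (\ref{C1bcond}).
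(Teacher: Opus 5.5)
Your proposal is correct and is essentially the paper's own proof. You obtain $T_{ij}h(z)=\int_\Omega T_{ij,z}G(\xi,z)\,d\nu(\xi)$ from Lemma \ref{Regularity}, bound the integrand by the uniform tangential estimate of Lemma \ref{gradient}, take the supremum over $\Omega\cap\{|z|=r\}$, and conclude with (\ref{C1bcond}); you also keep the factor $(1-r^2)^{n-2}$ in the supremum bound, which the paper's displayed inequality accidentally omits, so your write-up is the more accurate of the two.
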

\begin{proof} Let  \(1\le i<j\le n\).
By  Lemma \ref{gradient} and  \ref{Regularity} we know that  \(h\in C^1(\mathbb B^n)\) and we have
\[
T_{ij}h(z)
=
\int_\Omega T_{ij,z}G(\xi,z)\,d\nu(\xi).
\]
By Lemma \ref{gradient} there is a constant $C>0$ such  that
\[
|T_{ij,z}G(\xi,z)|
\le
C(1-|z|^2)^{n-2}|\xi-z|^{1-n}.
\]
uniformly in $z, \in  \mathbb B^n$ such that $z\not =\xi.$ Consequently,
\[
|T_{ij}h(z)|
\le
\int_\Omega |T_{ij,z}G(\xi,z)|\,d\nu(\xi)
\le
C(1-|z|^2)^{n-2}\left(\int_\Omega |\xi-z|^{1-n}\,d\nu(\xi)\right).
\]
Therefore, 
\[
\sup_{z\in\Omega\cap\{|z|=r\}}
|T_{ij}h(z)|
\le
C
\sup_{z\in\Omega\cap\{|z|=\rho\}}
\int_\Omega |\xi-z|^{1-n}\,d\nu(\xi)
\]
so that by the assumption (\ref{C1bcond}) it follows that
\[
\lim_{r\to1^-}
\sup_{z\in\Omega\cap\{|z|=r\}}
|T_{ij}h(z)|
=0.
\]
This proves the lemma.
\end{proof}

%%%%%%%%%%%
%%%%%%
The following lemma is the content of Proposition 19 and the text before in \cite{BEY}. 
\begin{lemma}\label{g}
For  $0\leq t<1,$ let 
\[
g_0(t)
=
-\frac{1}{4(n-1)}\log(1-t)
+
\frac{n-2}{4n(n-1)}\,t\,
{}_3F_2\left(
2-\frac n2,1,1;
1+\frac n2,2;
t
\right),
\]
here ${}_3F_2$ denotes the generalized hypergeometric function. For $x\in \mathbb B, $ 
define
$
g(x)=g_0(|x|^2).
$
Then we have $\Dh g=1$. 
\end{lemma}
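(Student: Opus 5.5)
Since $g(x)=g_0(|x|^2)$ is radial, I will compute $\Dh g$ through the radial form of the operator. Write $t=|x|^2$ and $f(x)=u(t)$. Then
\[
\nabla f=2u'(t)\,x,\qquad \langle x,\nabla f\rangle=2t\,u'(t),
\]
and
\[
\Delta f=4t\,u''(t)+2n\,u'(t).
\]
Substituting these into the formula for $\Dh$ gives
\[
\Dh f=(1-t)\Big[(1-t)\big(4tu''+2nu'\big)+4(n-2)tu'\Big].
\]
So the claim $\Dh g=1$ is equivalent to the second-order ODE
\[
4t(1-t)u''+\big(2n(1-t)+4(n-2)t\big)u'=\frac{1}{1-t}.
\]
This is a first-order linear equation in $v=u'$.

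I will handle that equation in two steps.

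\textbf{Integrating factor.} Divide by $4t(1-t)$ to get
\[
v'+\left(\frac{n}{2t}+\frac{n-2}{1-t}\right)v=\frac{1}{4t(1-t)^2}.
\]
The integrating factor is $t^{n/2}(1-t)^{-(n-2)}$. Taking the solution that is regular at $t=0$,
\[
v(t)=\frac{(1-t)^{n-2}}{4\,t^{n/2}}\int_0^t s^{n/2-1}(1-s)^{-n}\,ds .
\]
This matches the form of $\mathcal G'$ used in the proof of Lemma~\ref{gradient}, as expected from a Green-potential type construction.

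\textbf{Matching with $g_0$.} Rather than integrate $v$ in closed form, I will verify directly that $g_0'$ satisfies the first-order ODE. Differentiating the logarithmic term gives
\[
\frac{1}{4(n-1)(1-t)}.
\]
For the second term, I will expand ${}_3F_2\!\left(2-\frac n2,1,1;1+\frac n2,2;t\right)$ as a power series. The parameters $1$ in the numerator and $2$ in the denominator combine to give coefficients
\[
\frac{(2-n/2)_k\,k!}{(1+n/2)_k\,(k+1)!}=\frac{(2-n/2)_k}{(1+n/2)_k(k+1)},
\]
so that
\[
t\,{}_3F_2=\sum_k \frac{(2-n/2)_k}{(1+n/2)_k}\,\frac{t^{k+1}}{k+1}.
\]
Its derivative is therefore the ${}_2F_1\!\left(2-\frac n2,1;1+\frac n2;t\right)$ series. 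I then multiply the ODE by $(1-t)$ and compare coefficients of $t^k$. This reduces to the contiguity relation for ${}_2F_1(a,1;c;t)$ with $a=2-n/2$ and $c=1+n/2$, namely
\[
t(1-t)F'+\big(c-(a+2)t\big)F=c+\text{(correction)}.
\]
The logarithmic term is designed to absorb the correction, with its constant $\frac{1}{4(n-1)}$ chosen for that purpose.

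As a cross-check, I will also confirm the identity with an Euler transformation. Euler's formula gives
\[
{}_2F_1(a,1;c;t)=(1-t)^{c-a-1}\,{}_2F_1(c-a,c-1;c;t)=(1-t)^{n-2}\,{}_2F_1(n-1,n/2;1+n/2;t),
\]
and the last function is $\frac{n/2}{t^{n/2}}\int_0^t s^{n/2-1}(1-s)^{-(n-1)}\,ds$. This reproduces the integral formula for $v$ up to splitting $(1-s)^{-n}=(1-s)^{-(n-1)}+s(1-s)^{-n}$, which accounts for the $\log$ piece. Since this is the content of Proposition~19 in \cite{BEY}, citing it is also acceptable.

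The main obstacle is the coefficient bookkeeping: getting the constants $\frac{1}{4(n-1)}$ and $\frac{n-2}{4n(n-1)}$ to match exactly, and avoiding sign slips in the contiguity relation. I will check the case $n=2$ first as a sanity test. There the hypergeometric term vanishes and $g_0=-\frac14\log(1-t)$. Then $u'=\frac{1}{4(1-t)}$ and $u''=\frac{1}{4(1-t)^2}$, so
\[
4t(1-t)u''+4(1-t)u'=\frac{t}{1-t}+1=\frac{1}{1-t},
\]
which is correct. After that I will carry out the general-$n$ coefficient comparison.

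Finally, smoothness of $g$ on $\Bn$ is clear. The function $g_0$ is analytic on $[0,1)$ because the ${}_3F_2$ series has radius of convergence $1$, and $t=|x|^2$ is smooth in $x$. Hence the radial computation of $\Dh g$ is valid everywhere in $\Bn$, including at the origin.
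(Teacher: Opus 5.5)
Your route is valid and differs from the paper's. The paper gives no argument for this lemma; it imports it from Proposition~19 of \cite{BEY}. You instead reduce $\Dh g=1$ to a first-order linear ODE for $v=g_0'$ and check it directly. Your radial formula, the ODE, the integrating factor and the regular solution $v$ are all correct. So is $g_0'=\frac{A}{1-t}+BF$ with $A=\frac1{4(n-1)}$, $B=\frac{n-2}{4n(n-1)}$, $F={}_2F_1(a,1;c;t)$, $a=2-\frac n2$, $c=1+\frac n2$. Your approach gives a self-contained verification that also explains where the two constants come from; the citation only gives brevity.

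The displayed ``contiguity relation'' is wrong as written. The coefficient $c-(a+2)t$ is the one multiplying $F'$ in Gauss's second-order equation. With it, the ``correction'' equals $-1+(1-2t)F$, which contains $F$ itself, so the logarithmic term cannot absorb it. The identity you need is
\[
t(1-t)F'+\bigl(c-1-at\bigr)F=c-1 .
\]
It follows from $(k+c)f_{k+1}=(k+a)f_k$ for $f_k=(a)_k/(c)_k$, and note that $2n(1-t)+4(n-2)t=4(c-1-at)$. Substituting $v=\frac{A}{1-t}+BF$ into your ODE multiplied by $(1-t)$ then reduces everything to
\[
A\bigl(2n+2(n-2)t\bigr)+2nB(1-t)=1 .
\]
This is the pair of conditions $A+B=\frac1{2n}$ and $(n-2)A=nB$, which force exactly the stated $A$ and $B$. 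So there is no ``correction'': the hypergeometric part contributes a constant, and the logarithmic part supplies the missing linear term.

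Your Euler-transform cross-check also works, with one addition. The splitting $(1-s)^{-n}=(1-s)^{1-n}+s(1-s)^{-n}$ does not by itself produce the log term; you also need the integration by parts
\[
\int_0^t s^{n/2}(1-s)^{-n}\,ds=\frac{t^{n/2}(1-t)^{1-n}}{n-1}-\frac{n}{2(n-1)}\int_0^t s^{n/2-1}(1-s)^{1-n}\,ds .
\]
After that, $v=\frac{1}{4(n-1)(1-t)}+\bigl(\frac1{2n}-\frac1{4(n-1)}\bigr)F=g_0'$, which is a complete proof on its own.
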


%%%%%%%%%%

\section{Proof of the main result}

We now prove Theorem \ref{main}. Recall
\[
c=\nu(\Omega).
\]
By the mean-value hypothesis applied to the functions
\[
\xi\longmapsto G(\xi,z),
\qquad z\notin \overline{\Omega}\cap\mathbb B^n,
\]
we have already obtained
\[
h(z)=\int_\Omega G(\xi,z)\,d\nu(\xi)=cG(0,z),
\qquad z\notin \overline{\Omega}\cap\mathbb B^n.
\]
By Lemma \ref{Regularity},
\[
h\in C^1(\Bn)
\]
and
\[
\Delta_h h=1
\qquad\text{in }\Omega.
\]
By Lemma \ref{g} we also have
\[
\Delta_h g=1.
\]
Define
\[
u(z)=h(z)-g(z),
\]
then
\[
\Delta_h u=0
\qquad\text{in }\Omega.
\]
Thus \(u\) is \(H\)-harmonic in \(\Omega\).\\
By (\ref{hG}),
\[
u(z)=cG(0,z)-g(z),
\qquad z\in\mathbb B^n\setminus\overline\Omega.
\]
The right-hand side is radial, since both \(G(0,z)\) and \(g(z)\) depend only
on \(|z|\). Hence there exists a one-variable function \(F\) such that
\[
u(z)=F(|z|)
\qquad\text{for }z\in\mathbb B^n\setminus\overline\Omega.
\]

Because \(h\in C^1(\mathbb B^n)\) and \(g\in C^\infty(\Bn)\), the
identity above may be differentiated in every direction. In particular, if \(T\) is tangent to the Euclidean sphere
\(\{|z|=\mathrm{constant}\}\), then
\[
Tu=0
\qquad\text{on }\,\mathbb B^n\setminus\overline\Omega.
\]

We recall that for \(1\le i<j\le n\), the vector fields
\[
T_{ij}
=
x_i\frac{\partial}{\partial x_j}
-
x_j\frac{\partial}{\partial x_i}.
\]
are the infinitesimal generators of rotations. They satisfy
\[
T_{ij}(|x|^2)=0,
\]
and therefore are tangent to all Euclidean spheres centered at \(0\).

Since \(\Delta_h\) is invariant under the orthogonal group \(O(n)\), it
commutes with rotations. Hence
\[
[\Delta_h,T_{ij}]=0.
\]
Consequently,
\[
\Delta_h(T_{ij}u)
=
T_{ij}(\Delta_h u)
=
0
\qquad\text{in }\Omega.
\]
Thus \(T_{ij}u\) is \(H\)-harmonic in \(\Omega\).
Moreover, from the previous step,
\begin{eqnarray}\label{Tiju} 
T_{ij}u=0
\qquad\text{on }\,\mathbb B^n\setminus\overline\Omega.
\end{eqnarray}
By continuity of $u$ on $\mathbb B^n$ and  the assumption (\ref{Topcond}) we see that  the equality $(\ref{Tiju} )$ holds $\partial \Omega \cap \mathbb B^n.$  

Next consider the the
exhaustions
\[
\Omega_r=\Omega\cap\{|x|<r\},
\qquad 0<r<1.
\]
The boundary of \(\Omega_r\) consists of the finite boundary part
\(\partial\Omega\cap\{|x|<r\}\) and the  trace on $\Omega$ of the sphere with radius $r$,
\(\Omega\cap\{|x|=r\}\). 
Using $(\ref{Tiju} )$ and applying  the maximum principle (see~\cite{St}, 4.1.6]) on these
exhaustions we obtain
\begin{eqnarray*}
[T_{ij}u(z)] \leq 
\sup_{z\in\Omega\cap\{|z|=r\}}
|T_{ij}h(z)|
\end{eqnarray*}
 for all $z \in \Omega_r.$ Applying  Lemma \ref{Breg} and  taking the limit as $r\to 1$ yeilds

\begin{eqnarray}\label{Tijuglobal} T_{ij}u=0
\qquad\text{in }\Omega.
\end{eqnarray}
Hence
\[
T_{ij}u=0
\quad\text{on $\mathbb B^n$,}\quad\forall\,1\le i<j\le n.
\]
Since the vector fields \(T_{ij}\) span the tangent space to each Euclidean
sphere \(\{|x|=r\}\), we conclude that \(u\) is constant on any such sphere. Thus u is radial on $\mathbb B^n$.
Since \(0\in\Omega\) and \(\Omega\) is open, there exists \(r_0>0\) such the open ball  $B(0,r_0)$ centered at $0$ with radius  \(r_0\) 
satisfies 
$
B(0,r_0)\subset \Omega.$
On this ball, \(u\) is radial and  \(H\)-harmonic, therefore it is constant (see Corollary 4.1.3 in \cite{St}). By unique continuation for elliptic equations, \(u\) is constant in the whole
domain \(\Omega\).
 Thus there exists a constant \(k\in\mathbb R\)
such that
\[
u\equiv k
\qquad\text{in }\Omega.
\]
Since \(u=h-g\) and \(u\equiv k\), the boundary identity becomes
\[
cG(0,z)-g(z)=k,
\qquad z\in\partial\Omega\cap\Bn.
\]
The left-hand side is a radial function. Thus, setting \(r=|z|\), there is a
function
\[
F(r)=cG(0,r)-g(r)
\]
such that
\[
F(|z|)=k,
\qquad z\in\partial\Omega\cap\Bn.
\]
 The Green function
\(G(0,r)\) has a logarithmic-type singularity at \(r=0\), for $n = 2$, while \(G(0,r) \asymp r^{1-\frac{n}{2}}\) at $r =0$ for $n>2.$ The function
\(g(r)\) has the boundary behaviour
\[
g(r)\to+\infty
\qquad\text{as }r\to1^-.
\]
Moreover, differentiating the explicit radial formulas shows that
\begin{align*}
F'(r)&= c\frac{(1-r)^{n-2}}{r^{\frac{n}{2}}} - \frac{r}{2(n-1)(1-r^2)}
+
\frac{(n-2)r}{2n(n-1)}\\
&\quad+\,{}_2F_1\left(
2-\frac n2,1;
1+\frac n2;
r^2
\right)
\end{align*}
and changes sign at most once. Hence \(F\) increases from
\(-\infty\) to a maximum value and then decreases again to \(-\infty\) as
\(r\to1^-\).

Therefore the equation
\[
F(r)=k
\]
has at most two solutions in \(0<r<1\). Consequently \(|z|\) takes at most two
values on
\[
\partial\Omega\cap\mathbb B^n.
\]

Thus there exist radii \(0<r_1\le r_2<1\) such that
\[
\partial\Omega\cap\mathbb B^n
\subset
S_{r_1}\cup S_{r_2},
\qquad
S_r=\{x\in\mathbb R^n:\ |x|=r\}.
\]

Since \(\Omega\) is connected and contains \(0\), the two-radius alternative is
impossible. Indeed, if two distinct radii occurred, then the only connected
region lying between the two corresponding spheres would be the annulus
\[
r_1<|x|<r_2,
\]
which does not contain \(0\). The other possible configuration, namely the
union of an inner ball and an exterior region, is disconnected. Hence only one
radius occurs.
Therefore, for some \(R\in(0,1)\),
\[
\partial\Omega\cap\Bn=S_R.
\]
Using the topological assumption
\[
\partial\Omega\cap\Bn
=
\partial\overline{\Omega}\cap\Bn,
\]
we exclude hidden boundary components. Since \(\Omega\) is connected and
contains \(0\), it follows that
\[
\Omega=\{x\in\Bn:\ |x|<R\}.
\]
Thus \(\Omega\) is a Euclidean ball centered at \(0\). Euclidean balls centered
at \(0\) are exactly hyperbolic balls centered at \(0\) in the ball model.
This proves the theorem.

\section{Concluding remarks} 1)  As mentioned in the introduction, our method can be adapted to solve the question raised by Bruna-D\'etraz \cite{BD} concerning the extension of the their result beyond the relatively compact condition. This can formulated as
\begin{theorem}\label{Mharmonic}
Suppose that $n\geq 1$ and let $\Omega$  be an open connected subset of  the complex unit ball $ \BCn$ in $\C^n$, containing the origin $0$ such that $\lambda(\Omega) <\infty$, where
$ \lambda$ is the measure with density $\frac{1}{(1-|z|^2)^{n+1}}$ with respect to the Lebesgue measure.
Assume that for every compact set \(K\subset\BCn\),
\begin{eqnarray}\label{MhC1cond}
\sup_{z\in K}
\int_\Omega |\xi-z|^{1-2n}\,d\lambda(\xi)
<+\infty,
\end{eqnarray}
\begin{eqnarray}\label{MhC1bcond}
\lim_{r\to1^-} (1-r^2)^{n-2}
\sup_{z\in\Omega\cap\{|z|=r\}}
\int_\Omega |\xi-z|^{1-2n}\,d\nu(\xi)=0
\end{eqnarray}
and
\begin{eqnarray}\label{MTopcond}
\partial\Omega\cap\BCn =\partial\overline{\Omega}\cap\BCn.
\end{eqnarray}
Suppose that, for every $\lambda$-integrable  \(M\)-harmonic function \(f\) in a neighbourhood of
\(\overline{\Omega} \cap\BCn\), one has
\[
\frac1{\lambda(\Omega)}\int_\Omega f(x)\,d\lambda(x)=f(0).
\]
Then \(\Omega\) is a ball centered at \(0\). Equivalently, there
exists \(0<R<1\) such that
\[
\Omega=\{x\in\BCn:\ |x|<R\}.
\]
\end{theorem}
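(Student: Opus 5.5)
The plan is to transplant the proof of Theorem \ref{main} to the complex ball, replacing $\Dh$ by the invariant Laplacian $\widetilde\Delta$ on $\BCn$, $\nu$ by $\lambda$, and the rotation fields $T_{ij}$ by tangential fields adapted to the unitary group.

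\textbf{Step 1: the Green potential and the basic identity.} Let $G$ be the Green function of $\widetilde\Delta$, normalized by $\widetilde\Delta_z G(\xi,z)=\delta_\xi(z)$. It has the form $G(\xi,z)=\mathcal G(|\varphi_z(\xi)|^2)$ with $\mathcal G'(s)=C_n s^{-n}(1-s)^{n-1}$ (Rudin). Set $h(z)=\int_\Omega G(\xi,z)\,d\lambda(\xi)$ and $c=\lambda(\Omega)$. Applying the mean-value hypothesis to $\xi\mapsto G(\xi,z)$ for $z\notin\overline\Omega$ gives $h(z)=cG(0,z)$ off $\overline\Omega$.

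\textbf{Step 2: analogues of Lemmas \ref{gradient}--\ref{Breg}.} The gradient estimate becomes
\[
|\nabla_zG(\xi,z)|\le C(1-|z|^2)^{\alpha}|\xi-z|^{1-2n}.
\]
I would prove it with the computation of Lemma \ref{gradient}, using $1-|\varphi_z(\xi)|^2=(1-|\xi|^2)(1-|z|^2)/|1-\langle \xi,z\rangle|^2$ and $|1-\langle\xi,z\rangle|\ge 1-|\xi|$. This step yields some exponent $\alpha\ge0$, and condition (\ref{MhC1bcond}) as stated carries the factor $(1-r^2)^{n-2}$. I expect the \emph{main obstacle} to be checking that the computed $\alpha$ is at least $n-2$, or at least that the factor in (\ref{MhC1bcond}) suffices. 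One should also read the $\nu$ in (\ref{MhC1bcond}) as $\lambda$.

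Given the estimate, (\ref{MhC1cond}) lets us differentiate under the integral sign. As in Lemma \ref{Regularity}, this gives $h\in C^1(\BCn)$ and $\widetilde\Delta h=\chi_\Omega$ in $\mathcal D'(\BCn)$, so $\widetilde\Delta h=1$ classically in $\Omega$. As in Lemma \ref{Breg}, the tangential derivatives of $h$ tend to $0$ uniformly on $\Omega\cap\{|z|=r\}$ as $r\to1$.

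\textbf{Step 3: a radial solution and the tangential fields.} Take a radial $g$ with $\widetilde\Delta g=1$ on $\BCn$. For instance $g(z)=-\frac{1}{4n}\log(1-|z|^2)$ works up to the normalization of $\widetilde\Delta$, or one can solve the radial ODE. Set $u=h-g$; then $u$ is $M$-harmonic in $\Omega$ and radial on $\BCn\setminus\overline\Omega$.

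As tangential fields, use the real and imaginary parts of $T_{ij}=\bar z_i\partial_j-\bar z_j\partial_i$, together with $\mathrm{Im}\sum_k z_k\partial_k$. These generate $U(n)$ and span the tangent space of each sphere. Since $\widetilde\Delta$ is $U(n)$-invariant, they commute with it. Hence each $Tu$ is $M$-harmonic in $\Omega$ and vanishes on $\BCn\setminus\overline\Omega$. By (\ref{MTopcond}) and continuity, $Tu$ also vanishes on $\partial\Omega\cap\BCn$.

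\textbf{Step 4: conclusion.} Apply the maximum principle on $\Omega_r=\Omega\cap\{|z|<r\}$, using the boundary decay from Step 2, and let $r\to1$. This gives $Tu=0$ in $\Omega$, so $u$ is radial on $\BCn$. A radial $M$-harmonic function near $0$ is constant, and by unique continuation $u\equiv k$ in $\Omega$.

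Then $F(r)=cG(0,r)-g(r)$ equals $k$ on $\partial\Omega\cap\BCn$. Analysing the sign of $F'$ shows that $F(r)=k$ has at most two roots. Connectedness, $0\in\Omega$ and (\ref{MTopcond}) then force $\Omega=\{|z|<R\}$, exactly as in the proof of Theorem \ref{main}.
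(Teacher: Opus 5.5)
\textbf{Verdict: genuine gap.} The paper gives no separate argument for this theorem beyond saying that the proof of Theorem \ref{main} adapts, so transplanting that proof is the intended route. However, your Step 3 makes a false claim, and Step 4 depends on it. The real and imaginary parts of $T_{ij}=\bar z_i\partial_j-\bar z_j\partial_i$ do \emph{not} generate $U(n)$. For instance, $T_{ij}+\overline{T_{ij}}$ is the vector field $z\mapsto B\bar z$ with $B=E_{ji}-E_{ij}$ ($E_{ij}$ the matrix units). This is a complex-antilinear element of $\mathfrak{so}(2n)$ lying outside $\mathfrak{u}(n)$, and $\widetilde\Delta$ is not invariant under the real rotations it generates. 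Concretely, take $\widetilde\Delta=4(1-|z|^2)\sum_{i,j}(\delta_{ij}-z_i\bar z_j)\partial_i\bar\partial_j$, $V=T_{12}+\overline{T_{12}}$ and $f=|z_1|^2$. Then $Vf=-2\,\mathrm{Re}(z_1z_2)$ is pluriharmonic, so
\[
\widetilde\Delta(Vf)=0,\qquad V(\widetilde\Delta f)=V\bigl(4(1-|z|^2)(1-|z_1|^2)\bigr)=8(1-|z|^2)\,\mathrm{Re}(z_1z_2)\neq 0 .
\]
So $Vu$ need not be $M$-harmonic in $\Omega$, and the maximum-principle step fails for these fields.

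\textbf{Fix for the vector fields.} Use the genuine infinitesimal unitary rotations $X_Af(z)=\frac{d}{dt}\big|_{t=0}f(e^{tA}z)$ with $A\in\mathfrak{u}(n)$, e.g. $A=E_{ij}-E_{ji}$, $i(E_{ij}+E_{ji})$, $iE_{jj}$. These commute with $\widetilde\Delta$. Their values at $z$ span the tangent space of $\{|w|=|z|\}$, because $U(n)$ acts transitively on spheres. Moreover $|X_Ah(z)|\le\|A\|\,|\nabla h(z)|$, so the analogue of Lemma \ref{Breg} holds for them.

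\textbf{The exponent $\alpha$, which you leave open.} It does come out right, but Lemma \ref{gradient} cannot be copied verbatim. With $\rho=|\varphi_z(\xi)|^2$ you now have $\rho\,|1-\langle\xi,z\rangle|^2=|z-P_z\xi|^2+(1-|z|^2)|Q_z\xi|^2$, which is not $|\xi-z|^2$. It is only bounded below by $m|\xi-z|^2$, where $m=\max(1-|z|^2,1-|\xi|^2)$; the second term in the max comes from the symmetry $|\varphi_z(\xi)|=|\varphi_\xi(z)|$. Combine three facts:
\begin{enumerate}
\item this lower bound $\rho\,|1-\langle\xi,z\rangle|^2\ge m|\xi-z|^2$;
\item $|\nabla_z\rho|\le 2\sqrt\rho\,(1-\rho)/(1-|z|^2)$, which holds because the Bergman distance $\tanh^{-1}\sqrt\rho$ has Euclidean $z$-gradient at most $(1-|z|^2)^{-1}$;
\item $|1-\langle\xi,z\rangle|\ge m/2$.
\end{enumerate}
Together they give
\[
|\nabla_zG(\xi,z)|\le C\rho^{\frac12-n}(1-\rho)^n(1-|z|^2)^{-1}\le C(1-|z|^2)^{n-\frac32}|\xi-z|^{1-2n}.
\]
Since $n-\frac32>n-2$, hypothesis (\ref{MhC1bcond}) suffices, with $\nu$ read as $\lambda$. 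With these two corrections the rest of your outline goes through: the potential $g=-\frac1{4n}\log(1-|z|^2)$, the flux identity showing that $F'$ changes sign exactly once, and the topological conclusion.
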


2) We finally point out that the reduction of our method to the Euclidian case give the results \cite{AG} and \cite{Ku}. Indeed, in this case under the only assumption  (\ref{finiteVol}), both assumptions 
(\ref{C1cond}) and 
(\ref{C1bcond}) hold.

{\it Acknowledgements}: The authors would like to thank Miroslav Engli\v s for valuable comments and remarks.

\end{document}